\title[The number of Galois points]{On the number of Galois points for a plane curve in characteristic zero}
\author{Satoru Fukasawa}
\subjclass[2010]{Primary 14H50; Secondary 12F10}
\keywords{Galois point, plane curve}
\address{Department of Mathematical Sciences, Faculty of Science, Yamagata University, Kojirakawa-machi 1-4-12, Yamagata 990-8560, Japan}
\email{s.fukasawa@sci.kj.yamagata-u.ac.jp} 
\thanks{The author was partially supported by JSPS KAKENHI Grant Number 25800002.}
\newtheorem{theorem}{Theorem}[section]
\newtheorem{proposition}[theorem]{Proposition} 
\newtheorem{lemma}[theorem]{Lemma}
\newtheorem{fact}[theorem]{Fact}
\newtheorem{conjecture}[theorem]{Conjecture}
\theoremstyle{definition}
\newtheorem{remark}[theorem]{Remark}
\begin{document}
\begin{abstract}
For a plane curve, a point on the projective plane is said to be Galois if the projection from the point as a map from the curve to a line induces a Galois extension of function fields. 
We present upper bounds for the number of Galois points, if the genus is greater than zero. 
If the curve is not an immersed curve, then we have at most two Galois points. 
If the degree is not divisible by two nor three, then the number of outer Galois points is at most three. 
As a consequence, a conjecture of Yoshihara is true in these cases. 
\end{abstract}
\maketitle

\section{Introduction}  
In 1996, H. Yoshihara introduced the notion of the {\it Galois point} (\cite{miura-yoshihara, yoshihara1}). 
Let $C \subset \Bbb P^2$ be an irreducible plane curve of degree $d \ge 4$ over an algebraically closed field $K$ of characteristic zero, let $C_{\rm sm}$ be the set of all smooth points of $C$, and let $K(C)$ be its function field. 
A point $P \in C_{\rm sm}$ (resp. $P \in \mathbb{P}^2 \setminus C$) is said to be inner (resp. outer) Galois for $C$, if the function field extension $K(C)/\pi_P^*K(\Bbb P^1)$ induced by the projection $\pi_P: C \dashrightarrow \Bbb P^1$ from $P$ is Galois. 
The number of inner (resp. outer) Galois points is denoted by $\delta(C)$ (resp. $\delta'(C)$).  
It is interesting to determine $\delta(C)$ or $\delta'(C)$.  

If $C$ is smooth, then Yoshihara and Miura (\cite{miura-yoshihara, yoshihara1}) showed that $\delta(C)=0, 1$ or $4$ (resp. $\delta'(C)=0,1$ or $3$), and $\delta(C)=4$ (resp. $\delta'(C)=3$) if and only if $C$ is projectively equivalent to the curve defined by 
$$ X^3Z+Y^4+Z^4=0 \ \ (\mbox{resp. } X^d+Y^d+Z^d=0).  $$ 
If $d$ is prime and $C$ is not rational, then Duyaguit and Miura showed that $\delta'(C) \le 3$ (\cite{duyaguit-miura}). 
If $d-1$ is prime, then Miura gave an upper bound related to $\delta(C)$ (\cite{miura}). 
If $C$ is rational and $d \ne 12, 24, 60$, then Yoshihara showed that $\delta'(C) \le 3$ (\cite{yoshihara2}). 
The present author gave an upper bound for $\delta(C)$ (\cite{fukasawa2}); however, the bound is not sharp (in characteristic zero). 
Yoshihara conjectured the following (\cite{open}). 

\begin{conjecture}
For any irreducible plane curve $C$ of degree $d \ge 4$, 
$$ \delta(C) \le 4, \mbox{ and } \ \delta'(C) \le 3. $$
\end{conjecture}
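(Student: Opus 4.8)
The plan is to attack the Conjecture by studying, for each Galois point $P$, the Galois group $G_P\subset\mathrm{Aut}(\tilde C)$, where $\phi\colon\tilde C\to C$ is the normalization and $\pi_P\circ\phi\colon\tilde C\to\mathbb P^1$ exhibits $\mathbb P^1$ as the quotient $\tilde C/G_P$. The first step is to record the local ramification dictionary: for $\hat Q\in\tilde C$ with $\phi(\hat Q)=Q\neq P$, the ramification index $e_{\hat Q}$ of $\pi_P$ at $\hat Q$ equals the intersection multiplicity at $\hat Q$ of $\phi^{*}\overline{PQ}$, so $e_{\hat Q}\geq\mathrm{ord}_{\hat Q}(\phi)$, the latter exceeding $1$ exactly at the non-immersion points of $\phi$; when $P\in C_{\mathrm{sm}}$ one computes separately the index over $P$ from the order of contact of $T_PC$ with $C$. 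Together with Hurwitz's formula $2g-2=-2\deg\pi_P+\sum_{\hat Q}(e_{\hat Q}-1)$ and the inequality $g\leq\binom{d-1}{2}$, this controls the branch locus of every $\pi_P$, and, since the cover is Galois, forces each ramification index to divide $\deg\pi_P$.

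Second, for two distinct Galois points $P_1,P_2$ I would use the connecting line $\ell=\overline{P_1P_2}$. Because $\ell$ lies simultaneously in the pencil through $P_1$ and in the pencil through $P_2$, the divisor $\phi^{*}\ell$ (corrected at $P_1$, resp.\ $P_2$, when that point is inner) is a fibre of $\pi_{P_1}$ and a fibre of $\pi_{P_2}$; hence it is both $G_{P_1}$-stable and $G_{P_2}$-stable, and the two groups act on the same finite set of points of $\tilde C$. This is the mechanism that makes the groups interact. If $g\geq 2$ then $\mathrm{Aut}(\tilde C)$ is finite, so $\langle G_{P_1},\dots\rangle$ is a finite group of bounded order; if $g=1$ then $\tilde C$ is elliptic and $\mathbb P^1=\tilde C/G_P$ forces $G_P$ to be an extension of a cyclic group $C_m$ with $m\in\{2,3,4,6\}$ by a group of translations, with $|G_P|=\deg\pi_P$, a situation one can analyse directly.

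Third, the bounds themselves. In the non-immersed case one fixes a non-immersion point $\hat Q_0$; for every Galois point $P$ the fibre of $\pi_P$ through $\hat Q_0$ is ramified with common index $\geq2$, and this forced ramification, fed into Hurwitz together with the bi-invariance of connecting-line fibres, over-determines the ramification pattern as soon as three Galois points are present, leaving at most two. For outer Galois points with $\gcd(d,6)=1$, every ramification index of $\pi_P$ divides $|G_P|=d$, hence is coprime to $6$ and thus equal to $1$ or $\geq5$; in particular no line through $P$ is simply tangent to $C$, which rigidifies the configuration of branch lines, and the joint action of $G_{P_1},G_{P_2},G_{P_3}$ on the three connecting lines then excludes a fourth outer Galois point. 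Since ``at most two Galois points'' gives both $\delta(C)\leq4$ and $\delta'(C)\leq3$, and the second statement gives $\delta'(C)\leq3$ (the inner bound in that range following from the estimates already in the literature), the Conjecture holds in these ranges; in the smooth case it is the theorem of Yoshihara and Miura quoted above.

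The main obstacle is the passage from the smooth to the singular situation. For a singular curve $\mathrm{Aut}(\tilde C)$ is no longer a subgroup of $\mathrm{PGL}_3$, so the classical argument identifying $G_P$ with a cyclic group of homologies is unavailable, and the rigidity of the configuration must instead be extracted purely from the ramification data of several projections at once. Carrying out this simultaneous bookkeeping --- tracking which inertia subgroups of $G_{P_i}$ fix which points of the connecting-line fibres, comparing the two transitive actions on a common fibre, and eliminating the borderline numerical possibilities left by Hurwitz --- is the technically demanding heart of the proof.
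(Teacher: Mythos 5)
The first thing to say is that the statement you are proving is stated in the paper as a \emph{conjecture}: the paper does not prove it, and neither do you. Your proposal only sketches the special cases that the paper's Theorems \ref{main1} and \ref{main2} actually cover (non-immersed curves with $g\ge 1$; immersed curves with $g\ge1$ and $\gcd(d,6)=1$ or $\gcd(d-1,6)=1$, plus genus restrictions), and your own closing paragraph concedes that the ``technically demanding heart'' is not carried out. Entire ranges of the conjecture --- e.g.\ immersed singular curves of even degree, or rational curves --- are not addressed at all, so even as a plan this cannot yield the conjecture in full.

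Within the special cases, two essential ideas are missing and your substitutes would not work. For the non-immersed case you propose to feed ``forced ramification'' at a non-immersion point into Hurwitz together with ``bi-invariance of connecting-line fibres''; but Hurwitz gives no contradiction here. The paper's mechanism is entirely different: since the order sequence of $r:\hat{C}\to\mathbb{P}^2$ is $\{0,1,2\}$, the orders ${\rm ord}_{\hat{Q}}r^*h$ of lines through $Q$ take only two values, so with three Galois points two of the lines $\overline{P_iQ}$ have equal order $m$; if $m\ge2$ then $G_{P_i}$ and $G_{P_j}$ each contain a subgroup of order $m$ fixing $\hat{Q}$, these intersect trivially by Lemma \ref{two Galois}, yet the full stabilizer of $\hat{Q}$ is \emph{cyclic} when $g\ge1$ (Lemma \ref{fixed}) and a cyclic group of order $m^2$ has only one subgroup of order $m$ --- contradiction, so $m=1$ and every branch is non-singular. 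You never invoke the cyclicity of point stabilizers, which is the linchpin of both Lemma \ref{unramified1} and Theorem \ref{main1}. Likewise, for the bound $\delta'(C)\le3$ when $\gcd(d,6)=1$ you correctly observe that every ramification index is $1$ or $\ge5$, but then assert that ``the joint action on the three connecting lines excludes a fourth outer Galois point'' with no mechanism. The paper's argument is quantitative: each ramification point of index $e_{\hat{Q}}\ge m$ is a flex of weight $\nu_{\hat{Q}}-2\ge m-2$, Riemann--Hurwitz forces each Galois point to produce at least $\frac{m-2}{m-1}(2g-2+2d)$ worth of such weight, distinct Galois points have disjoint ramification loci (Lemma \ref{unramified2}, again via the cyclicity lemma), and the St\"ohr--Voloch count $\sum(\nu_{\hat{Q}}-2)\le3(2g-2)+3d$ caps the total, giving $\delta'(C)<3(m-1)/(m-2)\le4$ for $m\ge5$. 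Without this flex-counting step your outline produces no numerical bound at all; the observation that $r^*\overline{P_1P_2}$ is a common fibre, while true, is not used in the paper and does not by itself supply the needed rigidity.
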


In this study, we show that the conjecture is true in many cases. 
Let $r:\hat{C} \rightarrow C$ be the normalization, and let $g$ be the genus of $\hat{C}$. 

\begin{theorem} \label{main1}
Let $C \subset \Bbb P^2$ be an irreducible plane curve of degree $d \ge 4$, and let $g \ge 1$. 
If the morphism $r:\hat{C} \rightarrow \mathbb{P}^2$ is not unramified, that is, there exists a point $\hat{Q} \in \hat{C}$ such that the differential map of $r$ at $\hat{Q}$ is zero, then $$\delta(C)+\delta'(C) \le 2.$$ 
\end{theorem}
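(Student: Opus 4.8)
The plan is to exploit the branch of $C$ at $\hat{Q}$. Fix a point $\hat{Q}\in\hat{C}$ with $dr_{\hat{Q}}=0$, put $Q:=r(\hat{Q})$, let $b$ be the branch of $C$ at $Q$ corresponding to $\hat{Q}$, of multiplicity $e$, and let $T$ be its tangent line. Since $dr_{\hat{Q}}=0$, the branch $b$ is singular, so $e\ge 2$ and $Q$ is a singular point of $C$; in particular $Q$ is not a Galois point, and every Galois point $P$ satisfies $P\ne Q$, so the line $\overline{PQ}$ is defined, with $\overline{PQ}=T$ if and only if $P\in T$. I shall show that at most one Galois point lies on $T$ and at most one lies off $T$; since $\delta(C)+\delta'(C)$ counts all Galois points, inner and outer, and each is either on or off $T$, this yields $\delta(C)+\delta'(C)\le 2$.

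First, for any Galois point $P$ the ramification index of $\pi_P\colon\hat{C}\to\mathbb{P}^1$ at $\hat{Q}$ equals $I_Q(b,\overline{PQ})$, which is $e$ if $\overline{PQ}\ne T$ and is $m_T:=I_Q(b,T)>e$ if $\overline{PQ}=T$; hence $e_{\pi_P}(\hat{Q})\ge 2$ in every case. I use the standard fact (valid in characteristic zero) that $G_P$ is realized in $\mathrm{PGL}(3,K)$ as a group of transformations preserving $C$ whose non-identity elements are homologies with center $P$. Thus the stabilizer of $\hat{Q}$ in $G_P$ is a cyclic group of such homologies of order $e_{\pi_P}(\hat{Q})\ge 2$; fix a generator $\tau_P$, with axis $\ell_P$. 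Since $\tau_P$ fixes $Q\ (\ne P)$ we get $Q\in\ell_P$. The crucial step is a local power-series computation near $\hat{Q}$: in coordinates with $P=[0:0:1]$ and $\ell_P=\{Z=0\}$, so that $\tau_P$ acts near $Q$ by $(u,v)\mapsto(u,\zeta v)$ with $\zeta$ a primitive $e_{\pi_P}(\hat{Q})$-th root of unity, writing $b$ as $(u(t),v(t))$ and using that $\tau_P$ acts on $\hat{C}$ by $t\mapsto\zeta' t$, invariance of $b$ forces $u(t)\in K[[t^{e_{\pi_P}(\hat{Q})}]]$ and $v(t)=t^c\cdot(\text{unit in }t^{e_{\pi_P}(\hat{Q})})$ with $\gcd(c,e_{\pi_P}(\hat{Q}))=1$; comparing orders one deduces that \emph{if $\overline{PQ}\ne T$ then $\ell_P=T$}, while if $\overline{PQ}=T$ then $\ell_P\ne T$, so that $\ell_P\cap T=\{Q\}$. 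I expect this identification of $\ell_P$ with the tangent line $T$ in the case $P\notin T$ to be the main obstacle.

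With this in hand the two rigidity statements follow. Note that $\mathrm{Aut}(\mathbb{P}^2,C)$ is finite, since a positive-dimensional subgroup would contain a copy of $\mathbb{G}_a$ or $\mathbb{G}_m$ and force the irreducible curve $C$ to be rational, against $g\ge 1$. Suppose $P_1\ne P_2$ are Galois points off $T$; then $\tau_{P_1},\tau_{P_2}$ are homologies with common axis $T$. The transformations fixing a line pointwise form a group isomorphic to $K^{*}\ltimes K^{2}$, all of whose finite subgroups are cyclic, and in any such cyclic subgroup every non-identity element has the same center; as $\langle\tau_{P_1},\tau_{P_2}\rangle$ is finite, $P_1=\mathrm{center}(\tau_{P_1})=\mathrm{center}(\tau_{P_2})=P_2$, a contradiction. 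Suppose instead $P_1\ne P_2$ lie on $T$; each $\tau_{P_i}$ has center $P_i\in T$ and so preserves $T$, with $\tau_{P_i}|_T$ a non-identity element of $\mathrm{Aut}(T)$ fixing $P_i$ and $Q$. Restriction to $T$ sends the finite group $\langle\tau_{P_1},\tau_{P_2}\rangle$ onto a finite, hence cyclic, subgroup of the stabilizer of $Q$ in $\mathrm{Aut}(T)\cong\mathrm{PGL}(2,K)$; every non-identity element of such a cyclic group has the same two fixed points, so $\{Q,P_1\}=\{Q,P_2\}$ and $P_1=P_2$, a contradiction. This proves the two claims, hence the theorem.

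Inner and outer Galois points need no separate treatment: everything above depends only on the behavior of $\pi_P$ and $G_P$ near $\hat{Q}$, not on whether $P$ lies on $C$; the divisibility relations $e_{\pi_P}(\hat{Q})\mid\deg\pi_P$ are not even needed. The only genuinely delicate point is the local analysis in the second paragraph, which pins down the axis of $\tau_P$ and thereby makes the configuration of Galois points relative to $Q$ and $T$ rigid enough for the group-theoretic arguments to bite.
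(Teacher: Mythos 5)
There is a genuine gap, and it sits exactly where you flagged "the main obstacle" as being elsewhere: the sentence ``I use the standard fact (valid in characteristic zero) that $G_P$ is realized in $\mathrm{PGL}(3,K)$ as a group of transformations preserving $C$ whose non-identity elements are homologies with center $P$.'' That fact is standard only for \emph{smooth} plane curves of degree $\ge 4$, where every automorphism of the curve is induced by a linear transformation. Here $C$ is by hypothesis not an immersed curve, hence singular, and $G_P$ is a priori only a subgroup of $\mathrm{Aut}(\hat{C})$; an element of $G_P$ preserves the pencil of lines through $P$ pulled back to $\hat{C}$, but there is no reason it preserves the full net of lines cut out by $\mathbb{P}^2$ (the relevant $3$-dimensional subspace of $H^0(\hat{C},r^*\mathcal{O}(1))$ need not be $\sigma$-invariant when that space of sections is larger than the net), so $\sigma$ need not descend to $\mathrm{PGL}(3,K)$ at all. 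Worse, if your claim were true it would force $G_P$ to be a finite subgroup of the group of central collineations with center $P$, which in characteristic zero is isomorphic to $K^2\rtimes K^*$ and has only cyclic finite subgroups; but Galois groups of Galois points on singular plane curves are known to be non-cyclic in general (this is precisely the source of the exceptional degrees $12,24,60$, i.e.\ $A_4$, $S_4$, $A_5$, in Yoshihara's treatment of rational curves). So the claim is not merely unproved here, it is false in the generality you need, and everything downstream of it --- the identification of the axis $\ell_P$ with $T$, and both rigidity arguments inside $\mathrm{PGL}$ --- collapses with it. (The local power-series computation and the two group-theoretic steps are themselves correct \emph{conditional} on linearity, as is the reduction of the theorem to ``at most one Galois point on $T$ and at most one off $T$.'')

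For contrast, the paper's proof is designed to avoid any linearity or homology structure. It argues globally: if there were three Galois points $P_1,P_2,P_3$, then for \emph{every} $\hat{Q}\in\hat{C}$ some line through $Q=r(\hat{Q})$ has contact order $1$, so $r$ would be unramified, contradicting the hypothesis. The only inputs are (i) the stabilizer of $\hat{Q}$ in $G_{P_i}$ is cyclic of order equal to the ramification index, by the abstract fixed-point lemma for $\mathrm{Aut}(\hat{C})$ (\cite[Lemma 11.44]{hkt}) together with Fact \ref{Galois covering}; (ii) $G_{P_i}\cap G_{P_j}=\{1\}$, so two equal ramification indices $m\ge 2$ at $\hat{Q}$ would generate a cyclic group of order $m^2$ with two distinct subgroups of order $m$, which is impossible; and (iii) the order sequence of $r$ is $\{0,1,2\}$, so the contact orders at $\hat{Q}$ of lines through $Q$ take only two values and two of the three lines $\overline{P_iQ}$ must have equal order. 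If you want to salvage your local approach, you would need to replace the homology $\tau_P$ and its axis by an argument expressed purely in terms of the action of the cyclic stabilizer on the local ring at $\hat{Q}$, without ever placing $\tau_P$ in $\mathrm{PGL}(3,K)$.
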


By virtue of Theorem \ref{main1}, to find a bound for $\delta(C)$ or $\delta'(C)$, we have only to consider the case where $r:\hat{C} \rightarrow \mathbb{P}^2$ is unramified. 

\begin{theorem} \label{main2}
Let $C \subset \Bbb P^2$ be an irreducible plane curve of degree $d \ge 4$ (resp. $d \ge 3$), and let $g \ge 1$.  
Assume that $d-1$ (resp. $d$) is not divisible by two.  
Then, 
$$\delta(C) \le 5 \ \ (\mbox{resp. } \delta'(C) \le 5). $$ 
Furthermore, we have the following. 
\begin{itemize}
\item[(a)] If $d-1$ (resp. $d$) is not divisible by three, then $\delta(C) \le 3$ (resp. $\delta'(C) \le 3$). 
\item[(b)] If $g < (d-2)/2$ (resp. $g<(d+2)/2$), then $\delta(C) \le 3$ (resp. $\delta'(C) \le 3$). 
\item[(c)] If $g <2d-4$ (resp. $g<2d+1$), then $\delta(C) \le 4$ (resp. $\delta'(C) \le 4$). 
\item[(d)] If $2 \le g \le (d-1)^2/84$ (resp. $2 \le g \le d^2/84$), then $\delta(C) \le 1$ (resp. $\delta'(C) \le 1$). 
\end{itemize} 
\end{theorem}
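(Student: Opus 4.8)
The plan is to work with the normalization $r:\hat{C}\to\mathbb{P}^2$, which by Theorem \ref{main1} we may assume to be unramified (an immersion). A Galois point $P$ gives a finite morphism $\pi_P:\hat{C}\to\mathbb{P}^1$ of degree $d-1$ (inner case) or $d$ (outer case) whose Galois group $G_P$ acts on $\hat{C}$. Each nontrivial $\sigma\in G_P$ is a birational automorphism, hence an automorphism of $\hat{C}$, and the key point is to understand the geometry of these automorphisms: a generator of $G_P$ at an inner Galois point fixes the point $\hat{P}$ over $P$ with a specified ramification behavior, while the members of $G_P$ permute the $d-1$ (resp. $d$) points of a general fiber. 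I would first collect, presumably from the author's earlier work \cite{fukasawa2} or from standard facts, the structure of the fixed-point divisor of a generator $\sigma_P$ and the relation between $G_P$ and $G_{P'}$ for two distinct Galois points $P,P'$: the composite $\sigma_P\sigma_{P'}$ or the subgroup $\langle G_P,G_{P'}\rangle$ of $\mathrm{Aut}(\hat{C})$ controls everything.

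**The divisibility input and the group $G$.**
The hypothesis that $d-1$ (resp. $d$) is not divisible by $2$ (resp. by $2$ and $3$ in part (a)) is what forces the local structure: the stabilizer of a point $\hat{P}$ in $G_P$ is cyclic of order dividing $d-1$ (resp. $d$), and primality-type conditions on this order restrict how two such cyclic groups can intersect or generate. Concretely, I expect that if there were many Galois points, the group $G=\langle G_{P_1},\dots,G_{P_n}\rangle\subset\mathrm{Aut}(\hat{C})$ would be forced to be large relative to the genus, and then the Hurwitz bound $|\mathrm{Aut}(\hat{C})|\le 84(g-1)$ (for $g\ge 2$) would give a contradiction once $n$ is large enough; this is exactly the mechanism behind parts (c) and (d), where the genus is bounded in terms of $d$. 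For the universal bound $\delta(C)\le 5$ and for part (a), the argument should instead be combinatorial: count, via Riemann--Hurwitz applied to $\pi_{P_i}$, the contributions of the ramification divisors, and show that $\deg R_{\pi_{P_i}}$ forces $\sum$ over the Galois points to exceed what the canonical divisor $K_{\hat{C}}$ allows unless $n\le 5$ (resp. $n\le 3$). The role of $2\nmid(d-1)$ is to rule out the existence of an order-$2$ element, i.e. a hyperelliptic-type involution coming from the projection, which would otherwise allow configurations with more Galois points.

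**Executing the count.**
For the inner case, fix $n$ inner Galois points $P_1,\dots,P_n$ and let $e_i$ be a generator of $G_{P_i}$, an automorphism of order $d-1$. I would analyze the intersection pattern of the subgroups $G_{P_i}$ inside $\mathrm{Aut}(\hat{C})$: two distinct $G_{P_i}$, $G_{P_j}$ can share at most the identity when $d-1$ is odd (since a common nontrivial element would have to have order dividing $\gcd$ considerations that the divisibility hypothesis forbids, together with the fact that the unique point fixed by $e_i$ is $\hat{P_i}\ne\hat{P_j}$). Then either $G=\prod$-like and $|G|$ grows, invoking Hurwitz for (d) and the refined bounds of \cite{hurwitz}-type for (c); or the pencils $\{\pi_{P_i}\}$ are few, which is the generic situation and yields (a), (b), and the bound $5$. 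Part (b) should come from a direct estimate: if $g<(d-2)/2$ then the gonality of $\hat{C}$ is already close to $d-1$, and a Galois pencil of that degree is very constrained (Castelnuovo-type / the Galois condition forces the quotient $\hat{C}/G_{P}\cong\mathbb{P}^1$ with controlled branch locus), so at most three can coexist.

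**The main obstacle.**
The hard part will be the universal bound $\delta(C)\le 5$ in the absence of any genus restriction: here neither Hurwitz nor a crude ramification count alone suffices, and one must genuinely use the planarity of $C$—that the pencils $\pi_{P_i}$ are all cut out by lines through collinear or non-collinear points in $\mathbb{P}^2$—to bound the number of mutually compatible Galois structures. I expect the argument to split on whether the Galois points are collinear: if three Galois points lie on a line $\ell$, the restriction of the three pencils to $\ell$ and the behaviour of $C\cap\ell$ give strong relations (this is where $2\nmid(d-1)$ enters decisively, forbidding the "hyperelliptic" coincidence $\sigma_{P_i}=\sigma_{P_j}$ on the quotient); if no three are collinear, a dimension count on configurations of points in $\mathbb{P}^2$ with the requisite automorphisms caps $n$ at $5$. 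Handling the collinear case cleanly, and in particular extracting the improvement from $5$ to $3$ under the no-divisibility-by-three hypothesis (part (a)), is where I anticipate the bulk of the technical work.
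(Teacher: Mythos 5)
There is a genuine gap: your proposal correctly sets the stage (reduce to the immersed case via Theorem \ref{main1}, note that $2\nmid (d-1)$ forbids ramification of index $2$ since every $e_{\hat Q}$ divides $|G_P|$, and prove (d) by combining $G_{P_1}\cap G_{P_2}=\{1\}$ with the Hurwitz bound $84(g-1)$ --- this last part matches the paper exactly), but the central counting mechanism that yields the bound $5$ and parts (a)--(c) is missing, and the substitute you offer does not work. You propose to show that $\sum_i \deg R_{\hat\pi_{P_i}}$ ``exceeds what the canonical divisor $K_{\hat C}$ allows.'' But Riemann--Hurwitz gives $\deg R_{\hat\pi_{P_i}}=2g-2+2(d-1)$ for \emph{each} projection separately, which already exceeds $\deg K_{\hat C}=2g-2$; the canonical divisor imposes no joint constraint, because the ramification divisors of the various projections are divisors of different maps and there is no a priori reason their sum is bounded by anything intrinsic. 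Applying Riemann--Hurwitz to each $\hat\pi_{P_i}$ individually produces no interaction between distinct Galois points.

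The paper closes this gap with two ingredients you do not have. First, Lemma \ref{unramified2}: on an immersed curve, distinct Galois points have \emph{disjoint} ramification loci (away from the points themselves); this follows from the cyclic-stabilizer argument (two order-$m$ subgroups of $G_{P_1}$, $G_{P_2}$ fixing the same $\hat Q$ would generate a cyclic group of order $m^2$ with two distinct subgroups of order $m$). Second, the global resource that is actually consumed is not $K_{\hat C}$ but the flex divisor: if $e_{\hat Q}=\mathrm{ord}_{\hat Q}r^*h\ge 3$ for the line $\overline{PQ}$, that line is the tangent line at the nonsingular branch $\hat Q$, so $\hat Q$ is a flex contributing $\nu_{\hat Q}-2=e_{\hat Q}-2$ to the St\"ohr--Voloch bound $\sum(\nu_{\hat Q}-2)\le 3(2g-2)+3d$ (Fact \ref{flexes}). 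Since the parity hypothesis forces every ramification index to be $\ge 3$, each Galois point contributes at least $\frac{m-2}{m-1}\bigl(2g-2+2(d-1)\bigr)$ to this fixed budget, where $m\ge 3$ is the minimal index, and disjointness lets one multiply by $\delta(C)$. The single resulting inequality $\delta(C)\le \frac{m-1}{m-2}\cdot\frac{3(2g-2+d)}{2g-2+2(d-1)}<\frac{3(m-1)}{m-2}$ gives the bound $5$ for $m=3$, gives (a) because $3\nmid(d-1)$ forces $m\ge 5$, and gives (b) and (c) by substituting the genus hypotheses when $m=3$ --- no gonality, Castelnuovo, or collinearity case analysis is needed. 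Your admitted ``main obstacle'' (the unconditional bound $5$) is thus exactly the point where the proposal has no workable argument.
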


\begin{remark} 
\begin{itemize}
\item[(a)] Curves under the condition that $d-1$ is a prime number larger than $3$ (resp. that $d=4$) satisfy the assumption of (a) (resp. of (c)) in Theorem \ref{main2}. 
\item[(b)] Curves under the condition that $d$ is a prime number larger than $3$ (resp. that $d=3$) satisfy the assumption of (a) (resp. of (b)) in Theorem \ref{main2}. 
\end{itemize} 
\end{remark}

\section{Preliminaries}
Let $(X:Y:Z)$ be a system of homogeneous coordinates of the projective plane $\Bbb P^2$. 
When $P \in C_{\rm sm}$, the (projective) tangent line at $P$ is denoted by $T_PC \subset \Bbb P^2$. 
For a projective line $\ell \subset \Bbb P^2$ and a point $P \in C \cap \ell$, $I_P(C, \ell)$ is the intersection multiplicity of $C$ and $\ell$ at $P$.  
The line passing through points $P$ and $R$ is denoted by $\overline{PR}$, when $R \ne P$, and the projection from a point $P \in \Bbb P^2$ by $\pi_P$. 
The projection $\pi_P$ is represented by $Q \mapsto \overline{PQ}$.  
Let $r: \hat{C} \rightarrow C$ be the normalization, and let $g$ be the genus of $\hat{C}$.  
We write $\hat{\pi}_P=\pi_P \circ r$. 
The ramification index of $\hat{\pi}_P$ at $\hat{Q} \in \hat{C}$ is denoted by $e_{\hat{Q}}$. 
If $Q=r(\hat{Q}) \in C_{\rm sm}$, then $e_{\hat{Q}}$ is denoted also by $e_Q$.   
It is not difficult to check the following.  

\begin{lemma} \label{index}
Let $P \in \Bbb P^2$, and let $\hat{Q} \in \hat{C}$ with $r(\hat{Q})=Q \ne P$. 
Then for $\hat{\pi}_P$ we have the following. 
\begin{itemize}
\item[(1)] If $P \in C_{\rm sm}$, then $e_P=I_P(C, T_PC)-1$.  
\item[(2)] If $h$ is a linear polynomial defining $\overline{PQ}$ around $Q$, then $e_{\hat{Q}}={\rm ord}_{\hat{Q}}r^*h$. 
In particular, if $Q$ is smooth, then $e_Q =I_Q(C, \overline{PQ})$.  
\end{itemize} 
\end{lemma}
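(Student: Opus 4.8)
The plan is to prove both parts by an explicit local computation on $\hat{C}$, using the description of $\pi_P$ as the pencil of lines through $P$ together with $\hat{\pi}_P = \pi_P \circ r$. For part (2), I would fix two linearly independent linear forms $\ell_0, \ell_1$ on $\Bbb P^2$ vanishing at $P$, so that $\pi_P$ is given by $(\ell_0 : \ell_1)$ on $\Bbb P^2 \setminus \{P\}$ and the fibre through a point $R \ne P$ is the line $\overline{PR} = \{\ell_1(R)\ell_0 - \ell_0(R)\ell_1 = 0\}$. Since $Q \ne P$, after possibly swapping $\ell_0$ and $\ell_1$ we may assume $\ell_0(Q) \ne 0$, so $s := \ell_1/\ell_0 - \ell_1(Q)/\ell_0(Q)$ is a local parameter on $\Bbb P^1$ at $\pi_P(Q)$. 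Pulling back along $\hat{\pi}_P$ and clearing denominators gives
$$\hat{\pi}_P^* s = \frac{-\,r^*\!\left(\ell_1(Q)\ell_0 - \ell_0(Q)\ell_1\right)}{\ell_0(Q)\,r^*\ell_0},$$
whose denominator is a unit in $\mathcal{O}_{\hat{C},\hat{Q}}$ because $\ell_0(Q) \ne 0$. Since $\ell_1(Q)\ell_0 - \ell_0(Q)\ell_1$ cuts out $\overline{PQ}$, and any two linear polynomials defining $\overline{PQ}$ near $Q$ differ by a unit there, we obtain $e_{\hat{Q}} = {\rm ord}_{\hat{Q}}\,\hat{\pi}_P^* s = {\rm ord}_{\hat{Q}}\, r^* h$ for every such $h$. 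If in addition $Q$ is smooth, then $r$ is a local isomorphism at $\hat{Q}$, so ${\rm ord}_{\hat{Q}}\, r^* h = {\rm ord}_Q(h|_C) = I_Q(C, \overline{PQ})$ by the definition of the intersection multiplicity at a smooth point.

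For part (1), I would choose affine coordinates $(x,y)$ with $P = (0,0)$ and $T_PC = \{y = 0\}$. Then $\pi_P$ sends a point $(x,y)$ near $P$ to the slope $y/x \in \Bbb P^1$, which is a local parameter at $\pi_P(P)$ (the point of $\Bbb P^1$ corresponding to the direction of $T_PC$), so near $\hat{P}$ the map $\hat{\pi}_P$ is given by $r^*y/r^*x$. The line $\{x = 0\}$ passes through $P$ but is not the tangent line, hence it meets $C$ transversally at $P$ and ${\rm ord}_{\hat{P}}\, r^*x = 1$, whereas ${\rm ord}_{\hat{P}}\, r^*y = I_P(C, T_PC)$ by definition. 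Therefore
$$e_P = {\rm ord}_{\hat{P}}\!\left(\frac{r^*y}{r^*x}\right) = {\rm ord}_{\hat{P}}\, r^*y - {\rm ord}_{\hat{P}}\, r^*x = I_P(C, T_PC) - 1.$$

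I do not expect a genuine obstacle here: as the paper remarks, the statement is routine, and the whole argument amounts to bookkeeping with orders of vanishing and with units in the local rings of the smooth curve $\hat{C}$. The only points that deserve a word of care are the well-definedness of ${\rm ord}_{\hat{Q}}\, r^* h$, i.e. its independence of the chosen linear polynomial $h$ cutting out $\overline{PQ}$ near $Q$, and, in part (1), the identification of $r^*x$ as a uniformizer at $\hat{P}$, which is exactly the transversality of $\{x = 0\}$ with $C$ at $P$.
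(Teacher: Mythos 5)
Your proof is correct, and it is exactly the routine local computation the paper has in mind: the paper gives no proof of this lemma at all, dismissing it with ``It is not difficult to check the following.'' Both parts check out --- in (2) the identification of the fibre of $(\ell_0:\ell_1)$ through $Q$ with $\overline{PQ}$ and the unit denominator $\ell_0(Q)\,r^*\ell_0$ give $e_{\hat Q}={\rm ord}_{\hat Q}r^*h$, and in (1) the transversality of the non-tangent line $\{x=0\}$ at the smooth point $P$ correctly yields ${\rm ord}_{\hat P}r^*x=1$ and hence $e_P=I_P(C,T_PC)-1$ --- so you have supplied precisely the verification the author omits.
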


The order sequence of the morphism $r:\hat{C} \rightarrow \Bbb P^2$ is $\{0, 1, 2\}$ (see \cite[Ch. 7]{hkt}, \cite{stohr-voloch}). 
If $\hat{Q} \in \hat{C}$ is a non-singular branch, i.e., there exists a line defined by $h=0$ with ${\rm ord}_{\hat{Q}}r^*h=1$, then there exists a unique tangent line at $Q=r(\hat{Q})$ defined by $h_{\hat{Q}}=0$ such that ${\rm ord}_{\hat{Q}}r^*h_{\hat{Q}} \ge 2$.  
The order ${\rm ord}_{\hat{Q}}r^*h_{\hat{Q}}$ of the tangent line $h_{\hat{Q}}=0$ at $\hat{Q}$ is denoted by $\nu_{\hat{Q}}$. 
If $\nu_{\hat{Q}}>2$, then we call the point $\hat{Q}$ (or $Q=r(\hat{Q})$ if $Q \in C_{\rm sm}$) a {\it flex}.  
The set of all non-singular branches is denoted by $\hat{C}_0 \subset \hat{C}$.   
We recall the following fact (see \cite[Theorem 1.5]{stohr-voloch}).  

\begin{fact}[Count of flexes] \label{flexes} We have
$$ \sum_{\hat{Q} \in \hat{C}_0} (\nu_{\hat{Q}}-2) \le 3(2g-2)+3d. $$
\end{fact}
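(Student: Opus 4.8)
The plan is to deduce the inequality from the theory of order (Weierstrass) sequences for the two-dimensional linear system on $\hat{C}$ cut out by the lines of $\mathbb{P}^2$, i.e.\ from the classical Pl\"ucker-type formula that in characteristic zero underlies \cite[Theorem 1.5]{stohr-voloch}. Write $\mathfrak{d} = r^{*}|\mathcal{O}_{\mathbb{P}^2}(1)|$; since $C$ is not a line this is a base-point-free linear series of dimension $2$ and degree $d$, and since $\mathrm{char}\,K=0$ its generic order sequence is $(\epsilon_0,\epsilon_1,\epsilon_2)=(0,1,2)$. For $\hat{Q}\in\hat{C}$ let $(j_0(\hat{Q}),j_1(\hat{Q}),j_2(\hat{Q}))$ denote the $(\mathfrak{d},\hat{Q})$-order sequence, namely the increasing rearrangement of the numbers $\mathrm{ord}_{\hat{Q}}r^{*}\ell$ for lines $\ell$ through $r(\hat{Q})$; one has $j_i(\hat{Q})\ge\epsilon_i$ for every $i$, with equality for all but finitely many $\hat{Q}$.

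The first step is a global computation. Choosing locally affine coordinates $(1,x,y)$ as a basis of the three-dimensional space of sections of $\mathfrak{d}$ and a local uniformizer $t$ at $\hat{Q}$, the Wronskian $W=\det\bigl(D_t^{(i)}f_j\bigr)_{0\le i,j\le 2}$ formed with the Hasse derivatives is, after the standard twist, independent of these choices and defines a global section of the line bundle $3K_{\hat{C}}\otimes r^{*}\mathcal{O}_{\mathbb{P}^2}(3)$, where the exponent $3$ of $K_{\hat{C}}$ is $\epsilon_0+\epsilon_1+\epsilon_2$ and the exponent $3$ of $\mathcal{O}(1)$ is the number of sections. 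The zero divisor of $W$ is the ramification divisor $R$ of $\mathfrak{d}$, so by Riemann--Roch
$$ \deg R = (\epsilon_0+\epsilon_1+\epsilon_2)(2g-2) + 3d = 3(2g-2) + 3d. $$

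The second step is the local identification of $R$: at every $\hat{Q}$ one has $\mathrm{ord}_{\hat{Q}}R=\sum_{i=0}^{2}\bigl(j_i(\hat{Q})-\epsilon_i\bigr)\ge 0$. If $\hat{Q}\in\hat{C}_0$ is a non-singular branch then $j_0(\hat{Q})=0$, $j_1(\hat{Q})=1$, and $j_2(\hat{Q})=\nu_{\hat{Q}}$ by the definition of $\nu_{\hat{Q}}$, so the contribution is exactly $\nu_{\hat{Q}}-2$; for a singular branch the contribution is still nonnegative. Summing over all of $\hat{C}$ and discarding the singular contributions yields
$$ \sum_{\hat{Q}\in\hat{C}_0}(\nu_{\hat{Q}}-2) \le \deg R = 3(2g-2)+3d. $$

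The main obstacle is the first step: exhibiting $W$ as a genuine global section of $3K_{\hat{C}}\otimes r^{*}\mathcal{O}(3)$ and verifying that its vanishing order at each point is the full $\sum_i(j_i-\epsilon_i)$. This rests on the invariance of the Wronskian under change of basis and change of uniformizer, together with the fact (valid in characteristic zero) that the Hasse derivatives detect exactly the orders $j_i(\hat{Q})$; granting these, the degree drops out of Riemann--Roch and the remainder is bookkeeping. Since this is precisely the content of \cite{stohr-voloch}, in practice one invokes that reference, but the argument above is the route one would reconstruct.
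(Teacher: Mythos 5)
The paper does not prove this statement at all; it is recorded as a known Fact with a citation to St\"ohr--Voloch, and your argument is a correct reconstruction of precisely that reference's proof specialized to characteristic zero (Wronskian of the $g^2_d$ of lines as a section of $3K_{\hat{C}}\otimes r^{*}\mathcal{O}(3)$, local vanishing $\sum_i(j_i-\epsilon_i)$, contribution $\nu_{\hat{Q}}-2$ at a non-singular branch, nonnegativity elsewhere). So the proposal is correct and follows essentially the same route the paper relies on.
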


On a Galois covering of curves, the following holds in general (see \cite[III. 7.2, 8.2]{stichtenoth}). 

\begin{fact} \label{Galois covering} 
Let $\theta: C \rightarrow C'$ be a Galois covering of degree $d$, and let $P \in C$. 
The ramification index at $P$ is denoted by $e_P$, and the stabilizer subgroup of $P$ by $G(P)$. 
Then we have the following. 
\begin{itemize}    
\item[(1)] The order of $G(P)$ is equal to $e_P$ for any point $P \in C$. 
\item[(2)] If $\theta(P)=\theta(Q)$, then $e_P=e_Q$. 
\item[(3)] The index $e_P$ divides the degree $d$. 
\end{itemize} 
\end{fact}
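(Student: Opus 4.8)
The plan is to deduce all three assertions from the transitivity of the Galois action on a fibre. Set $L = K(C)$ and regard $\theta^*K(C')$ as a subfield, so that $L/\theta^*K(C')$ is a Galois extension of degree $d$ with group $G$ that we identify with $\mathrm{Aut}(C/C')$. Under the correspondence between points of a smooth projective curve and places of its function field, the points of $C$ lying over a fixed $Q \in C'$ form a finite set $\{P_1, \dots, P_r\}$, and $G$ permutes it: each $\sigma \in G$ fixes $K(C')$, hence maps $\theta^{-1}(Q)$ to itself.

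The one substantial input — the step I expect to be the main obstacle — is that $G$ acts \emph{transitively} on $\{P_1, \dots, P_r\}$, the function-field counterpart of the classical transitivity of a Galois group on the primes above a fixed prime. I would either cite this directly (it is \cite[III.7.1, III.8.2]{stichtenoth}) or reprove it by a norm argument: if $P_1$ and $P_2$ lay in distinct $G$-orbits, the weak approximation theorem furnishes $z \in L$ that is regular at every $P_i$, vanishes at $P_2$, and is a unit at every point in the orbit of $P_1$; then $N_{L/\theta^*K(C')}(z) = \prod_{\sigma \in G} \sigma(z)$ lies in $K(C')$ and, evaluating $\mathrm{ord}_Q$ through $P_1$ and through $P_2$, is at once a unit and a non-unit at $Q$, a contradiction. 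Granting transitivity, the orbit-stabilizer theorem gives $r = [G : G(P)] = d/|G(P)|$.

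The three statements now follow formally. For (2): if $\theta(P) = \theta(P')$, transitivity gives $\sigma \in G$ with $\sigma(P) = P'$, and since $\sigma$ commutes with $\theta$ it carries the local picture of $\theta$ at $P$ isomorphically onto that at $P'$, whence $e_P = e_{P'}$; write $e$ for this common value along the fibre over $Q$. Because $\mathrm{char}\,K = 0$ the extension $L/\theta^*K(C')$ is separable, and because $K$ is algebraically closed all residue fields are $K$ and hence all residue degrees are $1$; thus the fundamental identity $\sum_{i=1}^r e_{P_i}f_{P_i} = d$ collapses to $re = d$. Comparing with $r = d/|G(P)|$ yields $e = e_P = |G(P)|$, which is (1). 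Finally (3) is immediate: by (1), $e_P = |G(P)|$ is the order of a subgroup of $G$, so it divides $|G| = d$ by Lagrange's theorem.
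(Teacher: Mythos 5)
Your proof is correct. The paper gives no argument for this Fact at all---it is quoted as a known result with a pointer to \cite[III.7.2, 8.2]{stichtenoth}---and what you have written (transitivity of $G$ on the fibre via the norm/approximation argument, orbit--stabilizer, the fundamental identity $\sum e_{P_i}f_{P_i}=d$ with all residue degrees $1$ over an algebraically closed field, and Lagrange for (3)) is precisely the standard proof found in that reference, so there is nothing to compare beyond noting the match.
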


\section{Proof} 
Whenever we consider a Galois point $P$, we assume that $P$ is inner or outer Galois, that is, $P \in C_{\rm sm} \cup (\mathbb{P}^2 \setminus C)$.  
For a Galois point $P$, the Galois group is denoted by $G_P$. 
If $P \in C_{\rm sm}$ (resp. $P \in \mathbb{P}^2 \setminus C$), then the order $|G_P|$ is equal to $d-1$ (resp. $d$). 
We can consider $G_P$ as a subgroup of the automorphism group ${\rm Aut}(\hat{C})$. 
The following holds. 

\begin{lemma} \label{two Galois}
Let $P_1, P_2 \in \mathbb{P}^2$ be Galois points with $P_1 \ne P_2$. 
Then, $G_{P_1} \cap G_{P_2} =\{1\}$. 
\end{lemma}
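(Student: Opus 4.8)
The plan is to suppose for contradiction that there is a nontrivial automorphism $\sigma \in G_{P_1} \cap G_{P_2}$, and to derive a contradiction from the fact that $\sigma$ respects both projections $\hat{\pi}_{P_1}$ and $\hat{\pi}_{P_2}$. Since $\sigma \in G_{P_1}$, the projection $\hat{\pi}_{P_1}$ is $\sigma$-invariant, so geometrically $\sigma$ permutes the fibers of $\pi_{P_1}$, i.e. it sends a point $Q \in C$ to another point on the line $\overline{P_1 Q}$ (or, more precisely at the level of branches, $r(\sigma(\hat{Q}))$ lies on the line through $P_1$ and $r(\hat{Q})$). The same holds with $P_1$ replaced by $P_2$. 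The key point is then that for a generic branch $\hat{Q}$, the point $r(\sigma(\hat{Q}))$ must lie on both lines $\overline{P_1 r(\hat{Q})}$ and $\overline{P_2 r(\hat{Q})}$; since $P_1 \ne P_2$, these two lines are distinct for generic $\hat{Q}$ (the locus where $r(\hat Q)$, $P_1$, $P_2$ are collinear is contained in the single line $\overline{P_1P_2}$ meeting $C$ in finitely many points), and two distinct lines meet only in one point, namely $r(\hat{Q})$ itself. Hence $r(\sigma(\hat{Q})) = r(\hat{Q})$ for all $\hat{Q}$ outside a finite set.

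From $r \circ \sigma = r$ off a finite set, I would conclude $r \circ \sigma = r$ everywhere by continuity (both sides are morphisms $\hat{C} \to \mathbb{P}^2$ agreeing on a dense open set). Now $r$ is the normalization map, hence birational onto $C$; in particular there is a dense open subset $U \subseteq \hat{C}$ on which $r$ is injective (an isomorphism onto its image). For $\hat{Q} \in U \cap \sigma^{-1}(U)$ we then get $\sigma(\hat{Q}) = \hat{Q}$ from injectivity of $r$ on $U$, so $\sigma$ fixes a dense open subset of $\hat{C}$. An automorphism of a curve fixing infinitely many points is the identity, so $\sigma = 1$, contradicting the choice of $\sigma$. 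This establishes $G_{P_1} \cap G_{P_2} = \{1\}$.

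There is one subtlety to handle carefully: when I say $\sigma$ moves $\hat{Q}$ "along the line through $P_i$", I am using that $\hat{\pi}_{P_i} = \pi_{P_i} \circ r$ is the covering defining the Galois extension and that $\sigma$ is an element of its Galois group, so $\hat{\pi}_{P_i} \circ \sigma = \hat{\pi}_{P_i}$; unwinding the definition of $\pi_{P_i}$ as $Q \mapsto \overline{P_i Q}$, this says precisely that $r(\sigma(\hat{Q}))$ and $r(\hat{Q})$ lie on a common line through $P_i$, which is the statement I want. I should also dispose of the branches $\hat{Q}$ for which $r(\hat{Q}) = P_i$ (if $P_i \in C_{\rm sm}$) or $r(\hat{Q}) \in \overline{P_1 P_2}$; these form a finite set and do not affect the density argument. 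I expect the main (minor) obstacle to be phrasing the collinearity argument cleanly at the level of branches of $\hat{C}$ rather than points of $C$, since $r$ need not be injective and several branches may map to the same singular point of $C$; invoking that $r$ is an isomorphism over a dense open set sidesteps this, and the rest is routine.
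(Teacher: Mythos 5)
Your argument is correct and complete, but it is not the route the paper takes. The paper disposes of the lemma in three lines: for two outer Galois points it cites \cite[Lemma 7]{fukasawa1}, for two inner points it says the argument is ``similar,'' and for the mixed case it observes that $|G_{P_1}|=d-1$ and $|G_{P_2}|=d$ are coprime, so by Lagrange the intersection is trivial. Your proof instead gives a single self-contained geometric argument covering all three cases uniformly: a nontrivial $\sigma$ in the intersection would have to move each branch $\hat{Q}$ along both lines $\overline{P_1\,r(\hat{Q})}$ and $\overline{P_2\,r(\hat{Q})}$, which for $r(\hat{Q})\notin\overline{P_1P_2}$ forces $r(\sigma(\hat{Q}))=r(\hat{Q})$, hence (via birationality of $r$ and density) $\sigma=1$. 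This is essentially the content of the cited external lemma, so what you have done is reprove the reference rather than mirror the paper's proof; the trade-off is that your version is longer but self-contained and does not need the coprimality shortcut for the mixed case, while the paper's version is shorter at the cost of deferring the real work to \cite{fukasawa1}. All the delicate points you flag (branches over $P_i$ and over $C\cap\overline{P_1P_2}$, non-injectivity of $r$ at singular points, extending the identity $r\circ\sigma=r$ from a dense open set) are handled correctly.
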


\begin{proof}
For points $P_1, P_2 \in \mathbb{P}^2 \setminus C$, the assertion holds due to \cite[Lemma 7]{fukasawa1}. 
The proof for the case where $P_1, P_2 \in C_{\rm sm}$ is similar. 
If $P_1 \in C_{\rm sm}$ and $P_2 \in \mathbb{P}^2 \setminus C$, then the assertion is obvious, since the orders $|G_{P_1}|$ and $|G_{P_2}|$ are coprime. 
\end{proof}

Using Lemma \ref{two Galois}, and the well-known Hurwitz bound $84(g-1)$ for the order of the automorphism group of any smooth curve with genus $g\ge 2$, we have the following. 

\begin{proposition} \label{automorphism bound}
Assume that $g \ge 2$. 
If $\delta(C) \ge 2$ (resp. $\delta'(C) \ge 2$), then we have the inequality 
$$ (d-1)^2 \le 84(g-1) \ \ (\mbox{resp. } d^2 \le 84(g-1)). $$
\end{proposition}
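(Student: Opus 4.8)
The plan is to combine Lemma~\ref{two Galois} with the classical Hurwitz bound. Suppose first that $\delta(C) \ge 2$, so there exist distinct inner Galois points $P_1, P_2 \in C_{\rm sm}$. Each Galois group $G_{P_i}$ sits inside ${\rm Aut}(\hat{C})$ and has order $d-1$, and by Lemma~\ref{two Galois} we have $G_{P_1} \cap G_{P_2} = \{1\}$. Therefore the product set $G_{P_1} G_{P_2} \subset {\rm Aut}(\hat{C})$ has exactly $|G_{P_1}| \cdot |G_{P_2}| = (d-1)^2$ elements: indeed, if $\sigma_1 \tau_1 = \sigma_2 \tau_2$ with $\sigma_i \in G_{P_1}$ and $\tau_i \in G_{P_2}$, then $\sigma_2^{-1}\sigma_1 = \tau_2 \tau_1^{-1}$ lies in the intersection, hence equals $1$, so $\sigma_1 = \sigma_2$ and $\tau_1 = \tau_2$. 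Consequently $|{\rm Aut}(\hat{C})| \ge (d-1)^2$. Since $g \ge 2$, the Hurwitz bound gives $|{\rm Aut}(\hat{C})| \le 84(g-1)$, and we conclude $(d-1)^2 \le 84(g-1)$.

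For the outer case the argument is identical after replacing the order $d-1$ by $d$: if $\delta'(C) \ge 2$, pick distinct outer Galois points $P_1, P_2 \in \mathbb{P}^2 \setminus C$, note $|G_{P_i}| = d$, apply Lemma~\ref{two Galois} to get trivial intersection, deduce $|G_{P_1}G_{P_2}| = d^2$ by the same counting, and invoke the Hurwitz bound to obtain $d^2 \le 84(g-1)$.

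I do not anticipate any real obstacle here: the only nontrivial ingredient, the disjointness of the two Galois groups, is already supplied by Lemma~\ref{two Galois}, and the remaining step is the elementary counting observation that two subgroups meeting only in the identity generate a product set of size equal to the product of their orders. The one point requiring a word of care is that $G_{P_1}G_{P_2}$ need not be a subgroup; but we only need it as a subset of ${\rm Aut}(\hat{C})$ to bound the cardinality, so this causes no difficulty. The hypothesis $g \ge 2$ is exactly what is needed to legitimately apply the Hurwitz bound $84(g-1)$.
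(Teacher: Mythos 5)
Your proof is correct and follows essentially the same route as the paper: Lemma~\ref{two Galois} gives trivial intersection of the two Galois groups, the product-set count shows the subgroup they generate has order at least $(d-1)^2$ (resp.\ $d^2$), and the Hurwitz bound $84(g-1)$ finishes the argument. The extra detail you supply (the explicit counting of $G_{P_1}G_{P_2}$ and the remark that it need not be a subgroup) is just a fuller writing-out of the paper's one-line justification.
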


\begin{proof}
Let $P_1, P_2$ be distinct Galois points, and let $G_{P_1}, G_{P_2} \subset {\rm Aut}(\hat{C})$ be the Galois groups. 
The order of the subgroup generated by $G_{P_1}$ and $G_{P_2}$ is at least $(d-1)^2$ (resp. $d^2$), by Lemma \ref{two Galois}. 
By the Hurwitz bound, we have the conclusion.   
\end{proof}

Hereafter, we assume that $g \ge 1$. 
The following fact is well-known (\cite[Lemma 11.44]{hkt}). 
\begin{lemma} \label{fixed}
Let $G$ be a subgroup of the automorphism group ${\rm Aut}(\hat{C})$, and let $\hat{Q} \in \hat{C}$. 
If $\sigma(\hat{Q})=\hat{Q}$ for any $\sigma \in G$, then $G$ is a cyclic group. 
\end{lemma}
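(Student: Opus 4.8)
The plan is to produce a faithful character $\chi\colon G \to K^{\times}$; since every finite subgroup of the multiplicative group of a field is cyclic, this immediately gives the conclusion. Note first that $G$ is finite: for $g \ge 2$ this follows from the Hurwitz bound, and for $g = 1$ one realizes $\hat{C}$ as an elliptic curve with origin $\hat{Q}$, so that $G$ is contained in the (finite) group of automorphisms fixing the origin.

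The character is defined via the action on the cotangent space at $\hat{Q}$. Each $\sigma \in G$ fixes $\hat{Q}$, hence induces a $K$-algebra automorphism of the local ring $\mathcal{O}_{\hat{C},\hat{Q}}$, and therefore of its completion $\widehat{\mathcal{O}}_{\hat{C},\hat{Q}} \cong K[[t]]$, preserving the maximal ideal $\mathfrak{m}$. The resulting action on $\mathfrak{m}/\mathfrak{m}^2 \cong K$ is multiplication by a scalar $\chi(\sigma) \in K^{\times}$, and $\chi$ is visibly a homomorphism.

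The key step is the injectivity of $\chi$, and this is precisely where characteristic zero enters. Suppose $\sigma \in \ker\chi$ has finite order $n$; then $\sigma^{*}t = t + c_m t^{m} + (\text{higher order terms})$ with $m \ge 2$. If $c_m \neq 0$ for the least such $m$, an easy induction gives $(\sigma^{n})^{*}t = t + n c_m t^{m} + (\text{higher order terms})$, and since $n c_m \neq 0$ in characteristic zero this contradicts $\sigma^{n} = \mathrm{id}$. Hence $\sigma^{*}$ is the identity on $\widehat{\mathcal{O}}_{\hat{C},\hat{Q}}$, so the closed subscheme of $\hat{C}$ on which $\sigma$ agrees with the identity contains $\mathrm{Spec}(\mathcal{O}_{\hat{C},\hat{Q}}/\mathfrak{m}^{k})$ for every $k$; being a closed subscheme of the irreducible curve $\hat{C}$ with a point of infinite local length, it must be all of $\hat{C}$, i.e. $\sigma = \mathrm{id}$. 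Thus $\chi$ embeds $G$ into $K^{\times}$, and $G$ is cyclic.

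The only real obstacle is this injectivity statement: one must exclude a nontrivial automorphism of finite order tangent to the identity at $\hat{Q}$, which can genuinely occur in positive characteristic but not here. Everything else is formal; alternatively one may simply invoke \cite[Lemma 11.44]{hkt}.
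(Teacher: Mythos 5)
Your argument is correct and complete. Note, however, that the paper does not prove this lemma at all: it simply quotes it as a well-known fact with the reference \cite[Lemma 11.44]{hkt}. What you have written is essentially the standard proof lying behind that citation: the stabilizer of a point acts on the cotangent line $\mathfrak{m}/\mathfrak{m}^2$, giving a character $\chi\colon G\to K^\times$; in characteristic zero the kernel is trivial (your power-series computation $(\sigma^n)^*t=t+nc_mt^m+\cdots$ is exactly the point where positive characteristic would break the argument, as the higher ramification group can then be a nontrivial $p$-group), and a finite subgroup of $K^\times$ is cyclic. You also correctly supply the two details that the citation hides: finiteness of $G$ in the case $g=1$ (via the group of automorphisms of an elliptic curve fixing the origin), and the passage from ``$\sigma^*$ is the identity on the completed local ring'' to ``$\sigma=\mathrm{id}$'' via the equalizer being a closed subscheme of an irreducible curve with a point of infinite length. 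It is worth observing that the standing hypothesis $g\ge 1$, imposed in the paper just before the lemma, is genuinely needed (for $g=0$ the stabilizer of a point in ${\rm PGL}_2$ is the non-cyclic affine group), and your finiteness step is where that hypothesis enters your proof. So: correct, and a faithful reconstruction of the argument the paper outsources to the literature.
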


By using this fact, we have the following. 

\begin{lemma} \label{unramified1}
Let $P_1$ and $P_2$ be distinct Galois points, let $\hat{Q} \in \hat{C}$ with $Q=r(\hat{Q}) \ne P_1, P_2$, and let $h_1$ and $h_2$ be linear polynomials defining $\overline{P_1Q}$ and $\overline{P_2Q}$ around $Q$ respectively.
If ${\rm ord}_{\hat{Q}}r^*h_1={\rm ord}_{\hat{Q}}r^*h_2$, then ${\rm ord}_{\hat{Q}}r^*h_1={\rm ord}_{\hat{Q}}r^*h_2=1$. 
\end{lemma}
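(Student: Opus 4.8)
The plan is to argue by contradiction: suppose $e := {\rm ord}_{\hat{Q}}r^*h_1={\rm ord}_{\hat{Q}}r^*h_2 \ge 2$. By Lemma~\ref{index}(2), $e$ is the ramification index $e_{\hat{Q}}$ of both $\hat{\pi}_{P_1}$ and $\hat{\pi}_{P_2}$ at $\hat{Q}$. Since $P_1$ and $P_2$ are Galois points, Fact~\ref{Galois covering}(1) tells us that the stabilizer subgroups $G_{P_1}(\hat{Q}) \subset G_{P_1}$ and $G_{P_2}(\hat{Q}) \subset G_{P_2}$ each have order exactly $e \ge 2$. The key geometric observation is that an element $\sigma$ of $G_{P_i}(\hat{Q})$ fixes $\hat{Q}$ — indeed $G_{P_i}$ acts on $\hat{C}$ covering the trivial action downstairs on $\mathbb{P}^1$, and the stabilizer of the point $\hat{Q}$ (in the fibre of $\hat{\pi}_{P_i}$) is exactly $G_{P_i}(\hat{Q})$. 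So both $G_{P_1}(\hat{Q})$ and $G_{P_2}(\hat{Q})$ fix $\hat{Q}$.

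The heart of the argument is then to produce a nontrivial element common to $G_{P_1}$ and $G_{P_2}$, contradicting Lemma~\ref{two Galois}. The natural route is: consider the subgroup $H \subset {\rm Aut}(\hat{C})$ generated by $G_{P_1}(\hat{Q})$ and $G_{P_2}(\hat{Q})$. Every element of $H$ fixes $\hat{Q}$, so by Lemma~\ref{fixed} $H$ is cyclic. A cyclic group has a unique subgroup of each order dividing its order; in particular, for each prime $p \mid e$, the group $H$ has a \emph{unique} subgroup of order $p$. But $G_{P_1}(\hat{Q})$ has order $e \ge 2$, hence contains a subgroup of order $p$ for some prime $p \mid e$; likewise $G_{P_2}(\hat{Q})$ contains one of order $p$ (we may take the same prime since $|G_{P_1}(\hat Q)|=|G_{P_2}(\hat Q)|=e$). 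By uniqueness in the cyclic group $H$, these two order-$p$ subgroups coincide, giving a common nontrivial element of $G_{P_1} \cap G_{P_2}$. This contradicts Lemma~\ref{two Galois}, so $e=1$, as claimed.

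The step I expect to require the most care is the identification of the stabilizer $G_{P_i}(\hat{Q})$ with the full group of Galois automorphisms fixing $\hat Q$ — i.e., checking that the "stabilizer subgroup of $P$'' appearing in Fact~\ref{Galois covering} really is $\{\sigma \in G_{P_i} : \sigma(\hat{Q})=\hat{Q}\}$, so that Lemma~\ref{fixed} applies to $H$. One must be mildly careful when $Q$ is a singular point of $C$ and $\hat{Q}$ is one of possibly several branches over $Q$: the covering $\hat{\pi}_{P_i}: \hat{C} \to \mathbb{P}^1$ is still a Galois covering with group $G_{P_i}$, the fibre over $\hat{\pi}_{P_i}(\hat{Q})$ is a single $G_{P_i}$-orbit, and the stabilizer of $\hat{Q}$ in that orbit is precisely the set of automorphisms fixing $\hat{Q}$, of order $e_{\hat{Q}}$ by Fact~\ref{Galois covering}(1); this is exactly what is needed. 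Once that bookkeeping is in place, the cyclicity-plus-uniqueness argument is routine and short.
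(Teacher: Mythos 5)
Your proposal is correct and follows essentially the same route as the paper: both take the order-$e$ stabilizers of $\hat{Q}$ in $G_{P_1}$ and $G_{P_2}$ (via Lemma~\ref{index}(2) and Fact~\ref{Galois covering}(1)), observe that the group they generate fixes $\hat{Q}$ and is therefore cyclic by Lemma~\ref{fixed}, and derive a contradiction from the uniqueness of subgroups of a given order in a cyclic group together with Lemma~\ref{two Galois}. The only cosmetic difference is that you extract a common subgroup of prime order $p \mid e$, while the paper contrasts $|G| \ge e^2$ with the uniqueness of the order-$e$ subgroup; both are valid.
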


\begin{proof}
By the assumption, $m:={\rm ord}_{\hat{Q}}r^*h_1={\rm ord}_{\hat{Q}}r^*h_2$. 
Assume that $m \ge 2$.
By Lemma \ref{index}(2) and Fact \ref{Galois covering}(1), there exist subgroups $G_1$ of $G_{P_1}$ and $G_2$ of $G_{P_2}$ of order $m$ respectively such that $\sigma(\hat{Q})=\hat{Q}$ for any $\sigma \in G_1 \cup G_2$. 
Let $G$ be the group generated by subgroups $G_1$ and $G_2$. 
Since $G_1 \cap G_2 \subset G_{P_1}\cap G_{P_2}=\{1\}$ by Lemma \ref{two Galois}, $|G| \ge m^2$.  
Then, $G$ fixes the point $\hat{Q}$. 
By Lemma \ref{fixed}, $G$ is a cyclic group. 
Therefore, $G$ is a cyclic group of order $m^2$. 
However, the cyclic group of order $m^2$ has a unique subgroup of order $m$. 
This is a contradiction.   
We have $m=1$. 
\end{proof}

For immersed curves, we have the following. 

\begin{lemma} \label{unramified2}
Assume that $\hat{C}_0=\hat{C}$, that is, the morphism $r: \hat{C} \rightarrow \mathbb{P}^2$ is unramified. 
Let $\hat{Q} \in \hat{C}$ and $Q=r(\hat{Q})$. 
If $P_1$ and $P_2$ are distinct Galois points, and $\hat{Q} \in \hat{C}$ is a common ramification point for $\hat{\pi}_{P_1}$ and for $\hat{\pi}_{P_2}$, then $P_1 \in \mathbb{P}^2 \setminus C$ and $Q=P_2$, or $P_2 \in \mathbb{P}^2 \setminus C$ and $Q=P_1$. 
\end{lemma}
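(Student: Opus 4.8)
The plan is to argue by contradiction, distinguishing the case where $Q$ is neither Galois point from the case where it is one of them.

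First I would dispose of the case $Q \ne P_1$ and $Q \ne P_2$. Letting $h_i$ be a linear polynomial defining $\overline{P_iQ}$ around $Q$, the hypothesis that $\hat{Q}$ is a ramification point of both $\hat{\pi}_{P_1}$ and $\hat{\pi}_{P_2}$ becomes, via Lemma~\ref{index}(2), ${\rm ord}_{\hat{Q}}r^*h_1 \ge 2$ and ${\rm ord}_{\hat{Q}}r^*h_2 \ge 2$. Since $\hat{C}_0=\hat{C}$, the branch $\hat{Q}$ is non-singular, so the tangent line $h_{\hat{Q}}=0$ is the only line through $Q$ whose pullback has order $\ge 2$ at $\hat{Q}$; hence $\overline{P_1Q}$ and $\overline{P_2Q}$ both coincide with it and ${\rm ord}_{\hat{Q}}r^*h_1={\rm ord}_{\hat{Q}}r^*h_2=\nu_{\hat{Q}}$. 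But then Lemma~\ref{unramified1} forces both these orders to equal $1$, a contradiction. So $Q \in \{P_1, P_2\}$; by symmetry I may assume $Q=P_1$, and since a Galois point lying on $C$ is a smooth point, $Q=P_1 \in C_{\rm sm}$ and $\hat{Q}=r^{-1}(P_1)$.

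The substance of the proof is then to show $P_2 \in \mathbb{P}^2 \setminus C$. I would assume otherwise, so that $P_2 \in C_{\rm sm}$ and $\hat{\pi}_{P_2}$ is a Galois covering of degree $d-1$, and derive a contradiction. From $\hat{Q}$ being a ramification point of $\hat{\pi}_{P_2}$ with $r(\hat{Q})=P_1 \ne P_2$, the line $\overline{P_1P_2}$ must be the tangent line of $C$ at $P_1$, so $\overline{P_1P_2}=T_{P_1}C$, and Lemma~\ref{index}(2) gives $e_{\hat{Q}}=n:=I_{P_1}(C,T_{P_1}C)$ for $\hat{\pi}_{P_2}$; from $\hat{Q}$ being a ramification point of $\hat{\pi}_{P_1}$ with $r(\hat{Q})=P_1$, Lemma~\ref{index}(1) gives $e_{\hat{Q}}=n-1$ for $\hat{\pi}_{P_1}$, so $n \ge 3$. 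Next I would use that $\hat{\pi}_{P_1}$ maps $r^{-1}(P_1)=\hat{Q}$ to the point of $\mathbb{P}^1$ represented by $T_{P_1}C$ (a standard fact, read off from a local parametrization of the branch at $P_1$): by Fact~\ref{Galois covering}(2), every point of the fiber of $\hat{\pi}_{P_1}$ over this point has ramification index $n-1$, and similarly every point of the fiber of $\hat{\pi}_{P_2}$ over the point represented by $\overline{P_1P_2}=T_{P_1}C$ has index $n$ (since $\hat{Q}$ lies over $P_1 \in \overline{P_1P_2}$). Comparing these two fibers lying over the same line is the key geometric step: if there were a point $R_0 \in (C \cap T_{P_1}C)\setminus\{P_1,P_2\}$ with a branch $\hat{R}_0$ over it, then one local equation $h$ of $T_{P_1}C$ would define both $\overline{P_1R_0}$ and $\overline{P_2R_0}$ near $R_0$, and Lemma~\ref{index}(2) would force ${\rm ord}_{\hat{R}_0}r^*h$ to equal both $n-1$ and $n$ --- impossible. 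Hence $C \cap T_{P_1}C=\{P_1,P_2\}$; since $r^{-1}(P_2)$ lies in the fiber of $\hat{\pi}_{P_1}$ over $T_{P_1}C$, Lemma~\ref{index}(2) gives $I_{P_2}(C,T_{P_1}C)=n-1$, so $d=n+(n-1)=2n-1$. Finally $n=e_{\hat{Q}}$ for $\hat{\pi}_{P_2}$ divides $\deg \hat{\pi}_{P_2}=d-1=2(n-1)$ by Fact~\ref{Galois covering}(3), whence $n \mid 2$, contradicting $n \ge 3$. This yields $P_2 \in \mathbb{P}^2 \setminus C$ and $Q=P_1$, as desired; the other alternative in the statement comes from exchanging $P_1$ and $P_2$.

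I expect the first step and the identification $\overline{P_1P_2}=T_{P_1}C$ to be routine. The main obstacle is the second step: one has to manufacture a numerical contradiction, and this seems to require invoking the full Galois structure, namely Fact~\ref{Galois covering}(2) (equal ramification indices within a fiber) and Fact~\ref{Galois covering}(3) (the index divides the degree). The crux is noticing that $\hat{\pi}_{P_1}$ and $\hat{\pi}_{P_2}$ each possess a fiber over the common line $\overline{P_1P_2}=T_{P_1}C$, so that comparing ramification indices along that line both pins down $C \cap T_{P_1}C$ to $\{P_1,P_2\}$ and, through $d=2n-1$, collides with divisibility.
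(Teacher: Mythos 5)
Your argument is correct, and the two halves of it have a different status relative to the paper. The first step (showing $Q\in\{P_1,P_2\}$) is essentially identical to the paper's: both arguments observe that if $Q\ne P_1,P_2$ then the unique tangent line at the non-singular branch $\hat{Q}$ must contain both $P_1$ and $P_2$, so $Q\in\overline{P_1P_2}$, which Lemma \ref{unramified1} combined with Lemma \ref{index}(2) forbids. For the second step, however, the paper simply reduces to the statement that a point of $T_{P_2}C$ cannot be inner Galois and cites an external result (\cite[Lemma 2.5]{fukasawa2}) together with Lemma \ref{index}, whereas you prove the needed fact from scratch: assuming both points inner Galois, you compare the two Galois fibers lying over the common line $\overline{P_1P_2}=T_{P_1}C$, use Fact \ref{Galois covering}(2) to force the intersection $C\cap T_{P_1}C$ down to $\{P_1,P_2\}$ (any third point would have a branch whose order along that line must equal both $n-1$ and $n$), deduce $d=n+(n-1)=2n-1$ by B\'ezout, and then collide $n\mid d-1=2(n-1)$ from Fact \ref{Galois covering}(3) with $n\ge 3$. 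I checked the numerics and the fiber identifications (including that the branch at $P_1$ maps under $\hat{\pi}_{P_1}$ to the point of $\mathbb{P}^1$ given by $T_{P_1}C$, which is consistent with Lemma \ref{index}(1)), and the argument goes through. What your route buys is a self-contained proof within the toolkit already set up in this paper; what the paper's route buys is brevity, at the cost of sending the reader to another article for the key exclusion.
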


\begin{proof} 
Assume that $Q \ne P_1, P_2$. 
Since $\hat{C}_0=\hat{C}$ and $\hat{Q} \in \hat{C}$ is a ramification point for $\hat{\pi}_{P_1}$ and $\hat{\pi}_{P_2}$, by Lemma \ref{index}(2), points $P_1$ and $P_2$ are contained in the tangent line $T_{\hat{Q}}C \subset \mathbb{P}^2$ at $\hat{Q}$. 
Therefore, $Q \in \overline{P_1P_2}$. 
However, by Lemmas \ref{unramified1} and \ref{index}(2), $Q \not\in \overline{P_1P_2}$. 
This is a contradiction. 
We have that $Q=P_1$ or $P_2$. 

Assume that $Q=P_2$. 
Since $\hat{Q}$ is a ramification point for $\hat{\pi}_{P_1}$, by Lemma \ref{index}(2), $P_1 \in T_{Q}C=T_{P_2}C$. 
According to \cite[Lemma 2.5]{fukasawa2} and Lemma \ref{index}, the point $P_1$ is not inner Galois.  
\end{proof}

We prove main theorems. 

\begin{proof}[Proof of Theorem \ref{main1}] 
Assume by contradiction that there exist three Galois points. 
Let $\hat{Q} \in \hat{C}$ with $Q=r(\hat{Q})$. 
We show that there exists a line passing through $Q$ with a linear polynomial $h$ defining it such that ${\rm ord}_{\hat{Q}}r^*h=1$. 
We can assume that $Q$ is not Galois. 
If there exist Galois points $P_1$ and $P_2$ such that $\overline{P_1P_2} \ni Q$, then we have the claim, by Lemmas \ref{unramified1} and \ref{index}(2). 
Therefore, we can assume that lines $\overline{PQ}$ are different for each Galois points $P$. 
Let $P_1, P_2$ and $P_3$ be Galois points, and let $h_1, h_2$ and $h_3$ be defining polynomials of $\overline{P_1Q}, \overline{P_2Q}$ and $\overline{P_3Q}$ respectively. 
Since the linear system associated with $r: \hat{C} \rightarrow \mathbb{P}^2$ is of dimension three, the values ${\rm ord}_{\hat{Q}}r^*h \ge 1$ for all lines $h=0$ passing through $Q$ have two possibilities (\cite[p.218]{hkt}, \cite[p.3]{stohr-voloch}).
Therefore, ${\rm ord}_{\hat{Q}}r^*h_i={\rm ord}_{\hat{Q}}r^*h_j$ for some $i \ne j$. 
By Lemma \ref{unramified1}, we have ${\rm ord}_{\hat{Q}}r^*h_i={\rm ord}_{\hat{Q}}r^*h_j=1$. 
\end{proof}

\begin{proof}[Proof of Theorem \ref{main2}] 
Assertion (d) is nothing but Proposition \ref{automorphism bound}. 

Assume that $\delta(C) \ge 3$. 
By Theorem \ref{main1}, $r: \hat{C} \rightarrow \mathbb{P}^2$ is unramified. 
By Lemma \ref{unramified2}, if $\hat{Q}$ is a ramification point for $\hat{\pi}_P$ from an inner Galois point $P$, then $\hat{Q}$ is not a ramification point for any other inner Galois point. By Fact \ref{Galois covering}(3) and the assumption that $d-1$ is not divisible by two, the ramification index $e_{\hat{Q}} \ge 3$, for each Galois point $P$ and a ramification point $\hat{Q} \in \hat{C}$ for $\hat{\pi}_P$.  
Let $m(P):=\min \{e_{\hat{Q}} \ | \ \hat{Q} \in \hat{C}, e_{\hat{Q}} \ge 3\}$ for each Galois point $P$, and let $m:=\min \{m(P) \ | \ P \mbox{ is Galois}\}$.   
Then, $m$ divides $d-1$. 
By the Riemann--Hurwitz formula, we have 
$$ 2g-2+2(d-1)=\sum_{\hat{Q} \in \hat{C}}(e_{\hat{Q}}-1). $$
Then, 
\begin{eqnarray*}
\sum_{\hat{Q} \in \hat{C}, \ e_{\hat{Q}} \ge 3}(e_{\hat{Q}}-2)&=&2g-2+2(d-1)+(-1)\times \#\{\hat{Q} \in \hat{C} \ | \ e_{\hat{Q}} \ge 3\} \\
&\ge& 2g-2+2(d-1)-\frac{1}{m-1}(2g-2+2(d-1)) \\
&=& \frac{m-2}{m-1}(2g-2+2(d-1)). 
\end{eqnarray*} 
Using Lemma \ref{index}, for each Galois point, we need at least 
$$ \frac{m-2}{m-1}(2g-2+2(d-1))$$
flexes. 
By Fact \ref{flexes}, we have the inequality
$$ \delta(C) \frac{m-2}{m-1}(2g-2+2(d-1)) \le 3(2g-2)+3d, $$
and hence, 
$$ \delta(C) \le \frac{m-1}{m-2} \times \frac{3(2g-2+d)}{2g-2+2(d-1)} <\frac{3(m-1)}{m-2}. $$
If $m=3$, then $\delta(C) <6$.
If $m \ge 5$, then $\delta(C) <4$. 
We consider the case $m=3$. 
Then, 
$$ \frac{m-1}{m-2} \times \frac{3(2g-2+d)}{2g-2+2(d-1)} < 4 \ \ (\mbox{resp. } <5) $$
if and only if 
$$ g < \frac{d-2}{2} \ \ (\mbox{resp. } g<2d-4). $$

For outer Galois points, we have 
$$ \delta'(C) \le \frac{m-1}{m-2} \times \frac{3(2g-2+d)}{2g-2+2d} <\frac{3(m-1)}{m-2}. $$
Therefore, we have $\delta'(C) \le 3$ (resp. $\delta'(C) \le 5$) if $m \ge 5$ (resp. if $m=3$).
Under the assumption that $m=3$, 
$$ \frac{m-1}{m-2} \times \frac{3(2g-2+d)}{2g-2+2d} < 4 \ \ (\mbox{resp. } <5)$$
if and only if 
$$ g < \frac{d+2}{2}  \ \ (\mbox{resp. } g<2d+1). $$
We complete the proof. 
\end{proof}

\end{document}